%
%
%
%
%
\RequirePackage{fix-cm}
\documentclass[smallextended]{svjour3}       
\smartqed  
\usepackage{graphicx}
\usepackage{amsmath}
\usepackage{xcolor}
\usepackage[linesnumbered,ruled,vlined]{algorithm2e}
\usepackage{array}
\usepackage{booktabs}
%
%
%
%
%
\begin{document}

\title{The Mini-batch Stochastic Conjugate Algorithms with the unbiasedness and Minimized Variance Reduction
}
\author{Feifei Gao         \and
        Caixia Kou 
}
\institute{Caixia Kou \at
              \email{koucx@bupt.edu.cn}           
             \emph{School of science, Beijing University of Posts and
Telecommunications, No.10, XiTuCheng Road, Beijing, 100876,
China.}
}
\date{Received: date / Accepted: date}

\maketitle

\begin{abstract}
We firstly propose the new stochastic gradient estimate of unbiasedness and minimized variance in this paper. Secondly, we propose the two algorithms: Algorithm1 and Algorithm2 which apply the new stochastic gradient estimate to modern stochastic conjugate gradient algorithms SCGA \cite{kou2022mini} and CGVR \cite{jin2018stochastic}. Then we prove that the proposed algorithms can obtain linear convergence rate under assumptions of strong convexity and smoothness. Finally, numerical experiments show that the new stochastic gradient estimate can reduce variance of stochastic gradient effectively. And our algorithms compared with SCGA and CGVR can convergent faster in numerical experiments on ridge regression model.
\keywords{Stochastic conjugate gradient \and Variance reduction \and Linear convergence}    
\end{abstract}

\section{Introduction}
\label{intro}
With the development of big data, machine learning and deep learning is widely used in various fields. Many machine learning and deep learning problems can be described by the following finite-sum minimization problem.
\begin{equation}
\min\limits_{\omega \in R^{d}} f(\omega) = \frac{1}{n} \sum_{i=1}^n f_i(\omega)
\label{1:problem}
\end{equation}
Here $\omega$  is the decision variable, $f_i(\omega):R^{d}\to R $  is the loss function of $i$-th sample. 
When sample size $n$ is very large, it takes a lot of time calculating the gradient of the objective function to solve (\ref{1:problem}). Therefore, in order to reduce computation cost, a natural idea is that the full gradient at each iteration is replaced by calculating gradient of a random sample or average gradient over a mini-batch of random sample. 

The earliest algorithm using this idea is stochastic gradient descent algorithm (SGD) \cite{bottou2010large}. SGD and its variants \cite{dozat2016incorporating} \cite{reddi2019convergence} is widely used to minimize the loss function in large-scale machine learning problems for its advantages of the low computation cost. However, due to the variance of stochastic gradient, SGD can only reach an approximate solution if fixed step sizes are used, or it only obtains a slower sub-linear convergence rate if decreasing step sizes are used. In order to improve the convergence rate of SGD, many researchers design methods which can reduce the variance of stochastic gradient using historical information or periodically calculated full gradient information, such as SAG \cite{schmidt2017minimizing}, SAGA \cite{defazio2014saga}, SVRG \cite{johnson2013accelerating} et al. 

The stochastic gradient with variance reduction used in SVRG and SAGA algorithms is an unbiased estimate of the full gradient, but its variance is larger than SAG which is biased. A desired gradient estimate should be unbiased and has small variance. So we propose the new stochastic gradient estimate with unbiasedness and minimal vriance named. Based on our research interests, we mainly focus on applying the new stochastic gradient estimate in stochastic conjugate gradient methods.

Conjugate gradient methods is one class of the main methods for solving large-scale optimization problems. They have different formulas of $\beta_k$  among various conjugate gradient methods. The best-known standard formulas for $\beta_k$  are called the Fletcher–Reeves (FR) \cite{fletcher1964function}, Polak–Ribière–Polyak (PRP) \cite{polak1969note}\cite{polyak1969conjugate}, Hestenes–Stiefel (HS) \cite{hestenes1952methods}, Liu–Storey (LS) \cite{liu1991efficient} and Dai-Yuan (DY) \cite{dai1999nonlinear} formulas. Moreover, there is a large variety of hybrid conjugate gradient methods, such as TAS \cite{touati1990efficient}, PRP-FR \cite{hu1991global}, GN \cite{gilbert1992global} et al. The hybrid conjugate gradient methods combine the properties of the standard ones in order to get new ones, rapid convergent to the solution \cite{andrei2020nonlinear}. By mining the second-order information and analyzing the relationship between the conjugate direction and the quasi-Newton direction, proposed conjugate gradient methods includes: Dai and Kou \cite{dai2013nonlinear} and Hager and Zhang \cite{hager2005new} \cite{hager2013limited}. More details about conjugate gradients can be found in \cite{andrei2020nonlinear} \cite{Dai2020nonlinear}. Recently, CGVR \cite{jin2018stochastic} and SCGA \cite{kou2022mini} which is two kinds of stochastic conjugate gradient algorithms with variance reduction is proposed. The researchers find that stochastic conjugate gradient method can reach the convergence point faster than stochastic gradient descent algorithms.
so we propose the two improved stochastic conjugate gradient algorithms which apply the new stochastic gradient estimate mentioned above in SCGA \cite{kou2022mini} and CGVR algorithms \cite{jin2018stochastic}. 

The rest of this paper organize is as follows: Section 2 proposes the new stochastic gradient estimate with the unbiasedness and minimized variance. Section 3 describes the details of Algorithm1 and Algorithm2. The convergence of our algorithms are proved in Section 4. In the end, Section 5 shows the results of our numerical experiments and Section 6 concludes this paper.

\section{A New Stochastic Gradient Estimate with Unbiasedness and Minimized Variance (SGMV)}
\label{sec:1}
Firstly, let's review the characteristics of stochastic gradient estimate with variance reduction in the literature.  The stochastic gradient estimates of SAGA and SVRG is simplified to 
\begin{equation}
g_k=\nabla f_{S_k}(\omega_k)-\nabla f_{S_k}(\phi_k)+\frac{1}{n}\sum_{i=1}^{n}\nabla f_i(\phi_i^k)
\label{2:SAGA-SVRG}
\end{equation}
where there is a high correlation between $\nabla f_{S_k}(\omega_k)$ and $\nabla f_{S_k}(\phi_k)$. The selection of $\phi_k$ in SVRG and SAGA is different. Based on $\nabla f_{S_k}(\omega_k)$, $g_k$ includes the difference between the stochastic gradient of mini-batch sample $S_k$ and full gradient, so it can correct $\nabla f_{S_k}(\omega_k)$ and make it closer to the full gradient $\frac{1}{n}\sum_{i=1}^{n}\nabla f_i(\omega_i^k)$. 

For the convenience of analysis, we define that

\begin{equation}
X_j=\nabla f_j(\omega_k), Y_j=\nabla f_j(\phi_k)
\label{2:varible_define1}
\end{equation}
\begin{equation}
\bar{X}=\nabla f_{S_k}(\omega_k), \bar{Y}=\nabla f_{S_k}(\phi_k), E(\bar{Y})=\frac{1}{n}\sum_{i=1}^{n}\nabla f_i(\phi_i^k)
\label{2:varible_define2}
\end{equation}
where $X_j$ and $Y_j$ are respectively the stochastic gradient of sample $j$ at iteration point $\omega_k$ and $\phi_k$, $\bar{X}$ and $\bar{Y}$ is the mean of $X_j$ and $Y_j$, $j\in S_k$.
In order to find unbiased stochastic gradient estimate, we define the general form of stochastic gradient estimate 
$$\theta_{\gamma}=\bar{X}-\gamma(\bar{Y}-E(\bar{Y}))$$
Specifically, the $\theta_{\gamma=1}$ is stochastic gradient estimate (\ref{2:SAGA-SVRG}).
Next, through the following analysis of the expectation and variance of $\theta_{\gamma}$, a better stochastic gradient estimate $\gamma^*$ which  minimize the variance of $\theta_{\gamma}$ is found in this general form.

Firstly, $\theta_{\gamma}$ is an unbiased estimate to $E(\bar{X})$ .i.e.
\begin{equation}
E(\theta_{\gamma})=E(\bar{X})=\nabla f(\omega_k)
\label{2:unbiased}
\end{equation}
And the variance of $\theta_{\gamma}$ is 
\begin{equation}
\begin{aligned}
Var(\theta_{\gamma}) & =Var(\bar{X}-\gamma(\bar{Y}-E(\bar{Y})))\\
& =Var(\bar{X})+Var(\gamma(\bar{Y}-E(\bar{Y})))-2Cov(\bar{X},\gamma(\bar{Y}-E(\bar{Y})))\\
& =Var(\bar{X})+\gamma^2 Var(\bar{Y})-2 \gamma Cov(\bar{X},\bar{Y})
\label{2:variance}
\end{aligned}
\end{equation}
$Var(\theta_{\gamma})$ is a quadratic function about $\gamma$ , so we can easily obtain its minimizer 
\begin{equation}
\gamma^*=\frac{Cov(\bar X, \bar Y)}{Var(\bar Y)}
\label{2:gamma_define}
\end{equation}
When $\gamma$  is taken to $\gamma^*$,
\begin{equation}
Var(\theta_{\gamma=\gamma^*})=Var(\bar{X})(1-\rho_{\bar{X}\bar{Y}}^2) 
\label{2:var_gradient_gamma1}
\end{equation}
where $\rho_{\bar{X}\bar{Y}}$  is correlation coefficient of random variable  $\bar{X}$ and $\bar{Y}$. 
Because $\rho_{\bar{X}\bar{Y}}\in [-1,1]$ yields $Var(\theta_{\gamma=\gamma^*}) \leq Var(\bar{X})$, so $\theta_{\gamma=\gamma^*}$ is a stochastic gradient estimate of variance reduction. 
In addition, as iteration number $k$ increases and $\rho_{\bar{X}\bar{Y}}$ increases, then $Var(\theta_{\gamma=\gamma^*})$ decreases. In particular, if $\omega_k$ is near the optimal value $\omega^*$,   $\rho_{\bar{X}\bar{Y}}\rightarrow 1$, $Var(\theta_{\gamma=\gamma^*})\rightarrow 0$.  In a word, iterating around the optimal value, $\theta_{\gamma=\gamma^*}$ is less affected by variance.
Furthermore, due to $Var(\theta_{\gamma=\gamma^*})\leq Var(\theta_{\gamma=1})$, we see easily that $\theta_{\gamma=\gamma^*}$  is the better stochastic gradient estimate than $\theta_{\gamma=1}$ in SAGA/SVRG. 

In order to calculate $\gamma^*$, we need to estimate $Cov(\bar{X},\bar{Y})$ and $Var(\bar{Y})$ with known sample data. We mainly use the basic statistic theory about survey sampling to estimate them. We assume that mini-batch sample $S_k$ is sampled with replacement and each mini-batch sample is of size $|S|$. it is easy to obtain that
\begin{equation}
	\begin{aligned}
		Cov(\bar{X},\bar{Y})  &=Cov(\frac{1}{|S|}\sum_{j\in S_k} X_j,\frac{1}{|S|}\sum_{j\in S_k} Y_j)\\
		&=\frac{1}{|S|^2}Cov(\sum_{j\in S_k} X_j,\sum_{j\in S_k} Y_j)\\
		&=\frac{1}{|S|^2}\sum_{j\in S_k} Cov(X_j,Y_j)\\
		&\approx \frac{1}{|S|}s_{XY}\\
		\label{2:cov_estimate}
	\end{aligned}
\end{equation}
\begin{equation}
\begin{aligned}
Var(\bar{Y})  &=Var(\frac{1}{|S|}\sum_{j\in S_k} Y_j)\\
&=\frac{1}{|S|^2}Var(\sum_{j\in S_k} Y_j)\\
&=\frac{1}{|S|^2}\sum_{j\in S_k} Var(Y_j)\\
&\approx \frac{1}{|S|}s_{Y}^2\\
\label{2:var_estimate}
\end{aligned}
\end{equation}
(\ref{2:cov_estimate}) and (\ref{2:var_estimate}) show  the process of $Cov(\bar{X}, \bar{Y})$ and $Var(\bar{Y})$ estimated. Inside, the variance and covariance calculations are the elementwise operation. The estimation process of $Cov(\bar{X}, \bar{Y})$ is described as follows: the first equality use the definition of $\bar{X}$ and $\bar{Y}$.Then by using the properties of covariance, it yields the second equality. $X_i$ and $X_j$ ($i\neq j$) are independent that yield $Cov(X_i,X_j)=0$, so the third equality hold. Finally,in fourth equality the covariance $Cov(X_j, Y_j)$ is estimated by the sample covariance $s_{XY}$ approximatively. Similarly, $Var(\bar{Y})$ is estimated by the sample variance $s_{XY}$. Note that $s_{XY}$ and $s_{Y}^2$ is defined as (\ref{2:sample_variance}).
\begin{equation}
\begin{aligned}
s_{XY}&=\frac{1}{|S|-1}\sum_{j\in S_k}(X_j-\bar{X})(Y_j-\bar{Y})\\
s_{Y}^2&=\frac{1}{|S|-1}\sum_{j\in S_k}(Y_j-\bar{Y})^2
\label{2:sample_variance}
\end{aligned}
\end{equation}
To sum up, $\gamma^*$ can be estimated by ratio of $s_{XY}$ and $s_{Y}^2$
\begin{equation}
\gamma^*=\frac{Cov(\bar{X}, \bar{Y})}{Var(\bar{Y})}\approx \frac{s_{XY}}{s_Y^2}
\label{3:gamma_star}
\end{equation}
Based on the above analysis, the new stochastic gradient estimate $\theta_{\gamma=\gamma^*}$ with unbiasedness and minimal variance is as follows:
\begin{equation}
\begin{aligned}
g_k&=\nabla f_{S_k}(\omega^k)-\gamma^* (\nabla f_{S_k}(\phi_k)- \frac{1}{n}\sum_{i=1}^{n}\nabla f_i(\phi_i^k))\\
\text{where } \gamma^*&\approx \frac{s_{XY}}{s_Y^2}
\label{3:sgmv}
\end{aligned}
\end{equation} 
The new estimate can generate many algorithms based on $\phi_k$ which can be flexibly designed. The stronger the correlation between $\nabla f_{S_k}(\omega^k)$ and $\nabla f_{S_k}(\phi_k)$, the better the effect of the new stochastic gradient estimate.

Compared with $\gamma=1$, vector $\gamma=\gamma^*$ can be adjusted by each component of gradient and information of every iterations. Therefore, the new stochastic gradient estimate has adaptive parameter.

\section{Algorithm}
\label{sec:1}
Appling the new stochastic gradient estimate to SCGA and CGVR, we propose the improvement algorithms of SCGA and CGVR. In this section, we describe the details of two new algorithms.
\subsection{SCGA with the minimal variance stochastic gradient estimate}
\label{sec:2}
The main framework of Algorithm1 is as follows:\\
\begin{algorithm}[H]
		\DontPrintSemicolon
		 \textbf{Initialization:} Given $\omega_0 \in R^d$, compute the full gradient matix at initial iterate $\omega_0$ and store it:\\
		\For {$i = 1,2...n$} 
			{Compute $\nabla f_i(\omega_0)$ \\
			 Store $\nabla f(\omega_{[0]})\leftarrow \nabla f_i(\omega_0)$}
		$\mu_0 =\frac{1}{n}  \sum_{i=1}^{n} \nabla f_i(\omega_{[0]})$\\
		Set the initial stochastic gradient $g_0=\mu_0.$\\
		Set the initial direction $d_0=-g_0$\\
		\textbf{Iteration:}\\
		\For {$k = 1,2...$} 
			{
			Compute the stepsize $\alpha_{k-1}$ satisfying (\ref{3:wolfe_1}) and (\ref{3:wolfe_2})\\
			Update $\omega_k=\omega_{k-1}+\alpha_{k-1}d_{k-1}$\\
			Randomly sample a mini-batch sample $S_k$\\
			\For {$j : S_k$}
				{
				Compute $\nabla f_{j}(\omega_k)$ and store it into matrix $\nabla f_{[S_k]}(\omega_k) $\\
				Select $\nabla f_{j}(\omega_{[k-1]})$ in $\nabla f(\omega_{[k-1]})$ and store it into matrix $\nabla f_{[S_k]}(\omega_{[k-1]})$\\
				}
			\For {$r = 1,2...d$}
				{
				Using (\ref{2:sample_variance}), compute the sample covariance of $\nabla f_{[S_k]}^{(r)}(\omega_k)$ and $\nabla f_{[S_k]}^{(r)}(\omega_{[k-1]})$, the sample variance of $\nabla f_{[S_k]}^{(r)}(\omega_{[k-1]})$\\
				Using (\ref{3:sgmv}) Compute $\gamma^{*(r)}$
				}
			Compute $\nabla f_{S_k}(\omega_k)=\frac{1}{|S|}  \sum_{j\in S_k}\nabla f_{j}(\omega_k),\mu_{S_k}=\frac{1}{|S|} \sum_{j\in S_k}\nabla f_{j}(\omega_{[k-1]})$ \\			
			Compute $g_k =\nabla f_{S_k}(\omega_k)-\gamma^* (\mu_{S_k}- \mu_{k-1})$\\
			Compute $\beta_k=\beta_k^{PRP-FR} ,\beta_k^{PRP-FR}$ using (\ref{3:beta_prp_fr})\\
			Update $d_k=-g_k+\beta_k d_{k-1}$\\
			Update $\nabla f(\omega_{[k]})$ using (\ref{3:scga_full_gradient}) \\
			Update $\mu_{k}=\frac{1}{n}  \sum_{j=1}^{n} \nabla f_j(\omega_{[k]})$
			}
		\caption{SCGA with the minimal variance stochastic gradient estimate}
\end{algorithm}

Algorithm1 is mainly divided into two parts: initialization and iteration. 
In initialization, we compute gradient $\nabla f_i(\omega_{[0]})$ of each samples $i$ at the initial iteration point $\omega_0$  and store in the matrix 
$$\nabla f(\omega_{[0]}) =( \nabla f_1(\omega_{[0]}),\nabla f_2(\omega_{[0]}),...,\nabla f_n(\omega_{[0]}))$$ Then, we compute the full gradient $\mu_0$  at $\omega_0$, set initial stochastic gradient $g_0=\mu_0$ and initial direction $d_0=-g_0$ to compute the first iteration point
$\omega_1$. \\
In iteration, the step size $\alpha_{k-1}$  satisfies the following strong Wolfe conditions:
\begin{equation}
f_{S_{k}}(\omega_{k} + \alpha_{k} d_{k}) \leq f_{S_{k}}(\omega_{k}) +\sigma_1 \alpha_{k} g_k^T d_{k}
\label{3:wolfe_1}
\end{equation}
\begin{equation}
|g_{k+1}^T d_{k}| \leq -\sigma_2 g_{k}^T d_{k}
\label{3:wolfe_2}
\end{equation}
where $0 < \sigma_1< \sigma_2 < 1$. Then, it is easy to obtain the next iteration point  $\omega_k=\omega_{k-1}+\alpha_{k-1}d_{k-1}$.
Next, we determine the next search direction. The search direction $d_{k}$ is updated by $d_k=-g_k+\beta_k d_{k-1}$.
For the choice of $\beta_k$ , our algorithm uses $\beta_k^{PRP-FR}$\cite{hu1991global} which performs better than other hyhid conjugate gradient algorithms\cite{andrei2020nonlinear} such as $\beta_k^{TAS}$\cite{touati1990efficient} and $\beta_k^{GN}$\cite{gilbert1992global}. $\beta_k^{PRP-FR}$\cite{hu1991global}. It combines the properties of $\beta^{PRP}$ and $\beta^{FR}$ in order to be convergent rapidly. And the upperbound of $|\beta_k^{PRP-FR}|$ is $\beta_k^{FR}$ so that the proof of convergence in section 4 holds. $\beta_k^{PRP-FR}$ is implemented as 
\begin{equation}
\beta_k^{PRP-FR}=max \{0,min \{\beta^{PRP},\beta^{FR}\}\}
\label{3:beta_prp_fr}
\end{equation}
where 
\begin{equation}
\beta_k^{PRP}=\frac{g_k^T(g_k-g_{k-1})}{\|g_{k-1}\|^2},
\beta_k^{FR}=\frac{\|g_k\|^2}{\|g_{k-1}\|^2}
\label{3:beta_fr}
\end{equation}
For the computation of stochastic gradient, our algorithm use the new stochastic gradient estimate which is proposed in Section 2. In the end of each iteration, we update the gradient matrix with that
\begin{equation}
\begin{aligned}
	\begin{split}
		\nabla f(\omega_{[k]})= \left \{
		\begin{array}{ll}		
			\nabla f_j(\omega_k),  & \forall j \in S_k\\
			\nabla f_j(\omega_{[k-1]}),& \forall j \notin S_k\\
		\end{array}		
		\right.
	\end{split}	
\label{3:scga_full_gradient}
\end{aligned}
\end{equation}
and compute the full gradient $\mu_k$ at $k$-th iteration.
\subsection{CGVR with the minimal variance stochastic gradient estimate}
\label{sec:3}
The main framework of Algorithm2 is as follows:\\
\begin{algorithm}[H]
		\DontPrintSemicolon
		 \textbf{Initialization:} Given $x_0 \in R^d$, compute $h_0 = \frac{1}{n} \sum_{i=1}^{n} \nabla f_i(x_0)$:\\
		\textbf{Iteration:}\\		
		\For {$l = 1,2...T$} 
		{
		$\mu_{l-1} =\frac{1}{n}  \sum_{i=1}^{n} \nabla f_i(x_{l-1})$\\	
		Update $\omega_0 = x_{l-1},g_0 = h_{l-1},d_0 = - g_0$\\
			\For {$k = 1,2...m$} 
			{
				Compute the stepsize $\alpha_{k-1}$ satisfying (\ref{3:wolfe_1}) and (\ref{3:wolfe_2})\\
				Update $\omega_k=\omega_{k-1}+\alpha_{k}d_{k-1}$\\
				Randomly sampling a mini-batch sample $S_{k}$\\
			\For {$j : S_{k}$}
				{
				Compute $\nabla f_{j}(\omega_k),\nabla f_{j}(\omega_0)$\\
				Store $\nabla f_{[S_k]}(\omega_k)\leftarrow \nabla f_{j}(\omega_k),\nabla f_{[S_k]}(\omega_{0})\leftarrow \nabla f_{j}(\omega_0)$\\
				}
			\For {$r = 1,2...d$}
				{
				Using (\ref{2:sample_variance}), compute the sample covariance $\nabla f_{[S_k]}^{(r)}(\omega_k)$ and $\nabla f_{[S_k]}^{(r)}(\omega_0)$, the sample variance of $\nabla f_{[S_k]}^{(r)}(\omega_0)$ \\
				Compute $\gamma^{*(r)}$ using (\ref{3:sgmv})
				}
			Compute $\nabla f_{S_k}(\omega_k),\nabla f_{S_k}(\omega_0)$ \\			
			Compute $g_k =\nabla f_{S_k}(\omega_k)-\gamma^* (\nabla f_{S_k}(\omega_0)- \mu_{l-1})$\\
			Compute $\beta_k=\beta_k^{PRP-FR} ,\beta_k^{PRP-FR}$ using (\ref{3:beta_prp_fr})\\
			Update $d_k=-g_k+\beta_k d_{k-1}$\\			
			}
			Update $h_l=g_m$\\
			\textbf{Option I:}$x_l = \omega_m$\\
			\textbf{Option II:}$x_l = \omega_k$ for randomly chosen $k \in \{1,2...,m\}$\\
		}
		\caption{CGVR with the minimal variance stochastic gradient estimate}
\end{algorithm}

The iteration of Algorithm2 is composed of inner loop and outer loop. The outer loop periodically updates the full gradient of iteration point $x_{l-1}$. In the inner loop iteration, according to the iteration direction $d_{k-1}$ of the previous step, we firstly get the new step size $\alpha_k$ with the inexact line search satisfying the strong Wolfe condition (\ref{3:wolfe_1})(\ref{3:wolfe_2}). Second, we use stochastic conjugate gradient algorithm to determine the direction $d_k$ of the next iteration. Inside, Algorithm2 uses the new the stochastic gradient estimate, whereas CGVR uses the same stochastic gradient as SAGA. This is the main difference between Algorithm2 and CGVR. Due to the excellent characteristics of the new stochastic gradient estimate in variance reduction, Algorithm2 is a better stochastic conjugate gradient algorithm than CGVR.

Compared with Algorithm1,Algorithm2 has mainly the following differences. First,  Algorithm2 needs to calculate full gradient for each outer loop, but does not need to store the gradient of each sample (see line 4 of the Algorithm2). Algorithm1 does not need to calculate the full gradient, but needs to use a large matrix to store the latest gradient of each sample (see line 2-4 of Algorithm1). So Algorithm2 has the characteristics of large computation and small storage, while Algorithm1 has the characteristics of low computation and large storage. Second, in the stochastic gradient estimate step of Algorithm2, the checkpoint is the initial point $w_0$ of the inner loop (see line 17 of Algorithm2), Algorithm1 use virtual checkpoint $\omega_{[k-1]}$ in stochastic gradient estimate(see line 20 of Algorithm1). Other details of Algorithm2 are similar to Algorithm1, so we don't repeat them.

\section{Convergence}
\label{sec:2}

\textbf{Assumption 1($\mu$ -strong convexity and $L$  -smoothness)}  $f_i,1 \leq i \leq n$ is strongly convex and has Lipschitz continuous gradients, i.e.,

\begin{equation}
\mu I \prec \nabla^2 f_i(w)\prec LI 
\label{4:assumptinon_1}
\end{equation}
For $\omega \in R^d$, $\mu$ is strong convexity constant and $L$ is Lipschitz constant.\\
\textbf{Assumption 2 (lower and upper bounds of step size)} Every step size $\alpha_k$  in Algorithm1 and Algorithm2 satisfies  $\alpha_1 \leq \alpha_k \leq \alpha_2$\\
\textbf{Assumption 3 (upper bound of scalar $\beta_k$ )} There exists constant $\beta$  such that 
\begin{equation}
\beta_k \leq \frac{\|g_k\|^2}{\|g_{k-1}\|^2} \leq \beta
\label{4:assumption_3}
\end{equation} 
\begin{lemma}  
Under Assumption1, we have
 \begin{equation}
2\mu(f(\omega)-f(\omega^*)) \leq \|\nabla f(w)\|^2 \leq 2L(f(\omega)-f(\omega^*))
\label{4:lemma_1_formula}
\end{equation}
Where $\omega \in R^d$, $\omega^*$ is the unique minimizer
\end{lemma}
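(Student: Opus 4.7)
The plan is to prove the two inequalities separately, each as a one-line consequence of the quadratic upper/lower bounds that follow from Assumption~1. The key observation is that $L$-smoothness ($\nabla^2 f \prec LI$) yields a global quadratic upper bound on $f$, while $\mu$-strong convexity ($\nabla^2 f \succ \mu I$) yields a global quadratic lower bound. Minimizing each bound over its free argument produces the two desired inequalities.

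For the upper bound $\|\nabla f(\omega)\|^2 \le 2L(f(\omega)-f(\omega^*))$, I would start from the $L$-smoothness inequality
\[
f(y) \le f(\omega) + \nabla f(\omega)^T(y-\omega) + \tfrac{L}{2}\|y-\omega\|^2,
\]
valid for all $y\in R^d$ (obtained by integrating the Hessian bound, or directly from Assumption~1). I would then substitute the explicit minimizer $y = \omega - \tfrac{1}{L}\nabla f(\omega)$ of the right-hand side, giving $f(y)\le f(\omega) - \tfrac{1}{2L}\|\nabla f(\omega)\|^2$. Since $f(\omega^*)\le f(y)$ for every $y$, rearranging yields the claim.

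For the lower bound $2\mu(f(\omega)-f(\omega^*)) \le \|\nabla f(\omega)\|^2$ (the Polyak--\L{}ojasiewicz inequality), I would mirror the previous step using strong convexity:
\[
f(y) \ge f(\omega) + \nabla f(\omega)^T(y-\omega) + \tfrac{\mu}{2}\|y-\omega\|^2 \qquad \forall y\in R^d.
\]
Minimizing the right-hand side over $y$ gives the global lower bound $f(y)\ge f(\omega) - \tfrac{1}{2\mu}\|\nabla f(\omega)\|^2$. Specializing to $y=\omega^*$ and rearranging gives the inequality. Uniqueness of $\omega^*$ is automatic from strict convexity, so no extra argument is needed there.

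There is no real obstacle here; the only delicate step is making sure the two auxiliary quadratic inequalities are justified from Assumption~1 (they follow by a one-line Taylor expansion with integral remainder, bounding $\nabla^2 f$ above by $LI$ or below by $\mu I$). Once those are in place, the rest is pure algebra --- choose the right $y$ and read off the bound.
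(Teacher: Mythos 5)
Your proof is correct and complete: the two quadratic bounds follow from integrating the Hessian bounds in Assumption~1, and minimizing each over the free argument (with $y=\omega-\tfrac{1}{L}\nabla f(\omega)$ for the upper bound and $y=\omega^*$ after minimizing the strong-convexity minorant for the Polyak--\L{}ojasiewicz lower bound) gives exactly the stated inequalities. The paper itself states this lemma without any proof, treating it as a classical fact, so your argument is precisely the standard one a reader would be expected to supply.
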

\begin{lemma} 
Consider that Algorithm1 and Algorithm2 (CG) algorithm, where step size $\alpha_k$ satisfies strong Wolfe condition with $0 < \sigma_2 < \frac{1}{2}$ and $\beta_k$  satisfies $|\beta_k| \leq \beta_k^{FR}$ , then it generates descent directions $d_k$ satisfying
\begin{equation}
-\frac{1}{1-\sigma_2} \leq \frac{\langle g_k,d_k \rangle}{\|g_k\|^2} \leq \frac{2\sigma_2-1}{1-\sigma_2}
\label{4:lemma_2_formula}
\end{equation} 
\end{lemma}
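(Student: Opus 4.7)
The plan is to prove the inequality by induction on $k$, since the direction $d_k = -g_k + \beta_k d_{k-1}$ has an obviously controllable ratio $\langle g_k,d_k\rangle/\|g_k\|^2$ only when we already know something about the previous step. This is essentially Al-Baali's classical argument for Fletcher--Reeves-type methods, which applies here because the hypothesis $|\beta_k|\le\beta_k^{FR}$ is exactly what one needs.

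For the base case I would take $k=0$, where $d_0=-g_0$ gives the ratio $-1$; a quick check shows that under $0<\sigma_2<\tfrac12$ the value $-1$ lies inside the interval $[-1/(1-\sigma_2),(2\sigma_2-1)/(1-\sigma_2)]$. For the inductive step, I would start from the identity
\begin{equation*}
\frac{\langle g_{k+1},d_{k+1}\rangle}{\|g_{k+1}\|^2}=-1+\beta_{k+1}\,\frac{\langle g_{k+1},d_k\rangle}{\|g_{k+1}\|^2},
\end{equation*}
obtained by taking the inner product of the recursion $d_{k+1}=-g_{k+1}+\beta_{k+1}d_k$ with $g_{k+1}$. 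The second strong Wolfe condition (\ref{3:wolfe_2}) gives $|\langle g_{k+1},d_k\rangle|\le-\sigma_2\langle g_k,d_k\rangle$, and the hypothesis $|\beta_{k+1}|\le\beta_{k+1}^{FR}=\|g_{k+1}\|^2/\|g_k\|^2$ cancels the $\|g_{k+1}\|^2$ in the denominator, so the perturbation term is bounded in absolute value by $-\sigma_2\langle g_k,d_k\rangle/\|g_k\|^2$.

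At this point I would feed in the inductive hypothesis: since $\langle g_k,d_k\rangle/\|g_k\|^2\le (2\sigma_2-1)/(1-\sigma_2)<0$, the quantity $-\langle g_k,d_k\rangle/\|g_k\|^2$ is positive and at most $1/(1-\sigma_2)$. Substituting this into the displayed identity yields the upper bound
\begin{equation*}
\frac{\langle g_{k+1},d_{k+1}\rangle}{\|g_{k+1}\|^2}\le -1+\frac{\sigma_2}{1-\sigma_2}=\frac{2\sigma_2-1}{1-\sigma_2},
\end{equation*}
and symmetrically the lower bound $-1-\sigma_2/(1-\sigma_2)=-1/(1-\sigma_2)$, closing the induction. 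In particular, the upper bound is strictly negative because $\sigma_2<\tfrac12$, so $d_k$ is a descent direction.

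I do not anticipate a real obstacle: the only subtle point is making sure the cancellation between $\beta_{k+1}^{FR}$ and $\|g_{k+1}\|^{-2}$ is used \emph{before} invoking the inductive bound, so that the recursion on the ratio $\langle g_k,d_k\rangle/\|g_k\|^2$ closes on itself rather than accumulating stray factors of $\|g_{k+1}\|^2/\|g_k\|^2$. The condition $\sigma_2<\tfrac12$ is used precisely to guarantee $(2\sigma_2-1)/(1-\sigma_2)<0$, which both makes the ratio negative throughout the induction and ensures the geometric-style bound $\sigma_2/(1-\sigma_2)<1$ needed to keep the propagated constants finite.
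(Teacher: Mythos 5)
Your induction is correct and is exactly the classical Al-Baali/Gilbert--Nocedal argument: the base case $d_0=-g_0$ gives ratio $-1$, and the recursion $\frac{\langle g_{k+1},d_{k+1}\rangle}{\|g_{k+1}\|^2}=-1+\beta_{k+1}\frac{\langle g_{k+1},d_k\rangle}{\|g_{k+1}\|^2}$ combined with the curvature condition, the cancellation from $|\beta_{k+1}|\le\|g_{k+1}\|^2/\|g_k\|^2$, and the inductive lower bound closes the loop. The paper itself gives no proof, deferring to Gilbert and Nocedal [17, Lemma 3.1], and your argument is precisely the proof of that cited lemma, so the two approaches coincide.
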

The proof of Lemma 2 is can be found in [17, lemma 3.1]. This lemma can give the lower and upper bound $\frac{\langle g_k,d_k \rangle}{\|g_k\|^2}$, if the parameter $\beta_k$  is appropriately bounded in magnitude and $\alpha_k$  satisfies strong Wolfe conditions (\ref{3:wolfe_1})(\ref{3:wolfe_2}). This conclusion is very important for the following proof of convergence.
The following Theorem 1 and Theorem 2 show respectively the linear convergence of Algorithm1 and Algorithm2.
\begin{theorem}  
Let Assumption 1,2,3 hold. If the bound of the step-size in Algorithm1 satisfies:

\begin{equation}
0 < \alpha_1 < \frac{1-\beta}{2L\sigma_1}
\label{4:alpha_between}
\end{equation} 
Then we have:  $\forall k>0$

\begin{equation}
E(f(\omega_k))-f({\omega^*}) \leq C\xi^k (E(f(\omega_0))-f(\omega^*))
\label{4:theorem_concludtion}
\end{equation} 
where $\xi = \frac{(1-\sigma_1)(1-\beta)+2L\alpha\sigma_1\sigma_2(1-\beta^m)}{2\mu\sigma_1m(1-\sigma_2)(1-\beta)}<1, \omega^*$ is the unique minimizer of $f$
\end{theorem}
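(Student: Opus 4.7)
The plan is to establish a one-step recursion for $E[f(\omega_k)] - f(\omega^*)$ and then iterate it to obtain the stated linear rate. The overall architecture mirrors the classical convergence proof of deterministic CG under strong Wolfe line search (using Lemma 2 to guarantee descent in the conjugate direction), combined with a variance-reduction argument tailored to the new SGMV estimate and to the SAGA-style history maintained by Algorithm 1.

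First, I would start from the first strong Wolfe condition (\ref{3:wolfe_1}), which holds for the mini-batch function $f_{S_k}$, and take conditional expectation with respect to $S_k$ given $\omega_k$. Since the mini-batch is sampled uniformly, $E[f_{S_k}(\omega)] = f(\omega)$ for any fixed $\omega$, and by the unbiasedness (\ref{2:unbiased}) we have $E[g_k \mid \omega_k] = \nabla f(\omega_k)$, so this step yields
$$E[f(\omega_{k+1}) - f(\omega^*) \mid \omega_k] \leq f(\omega_k) - f(\omega^*) + \sigma_1 \alpha_k\, E[g_k^{T} d_k \mid \omega_k].$$
Next, I would invoke Lemma 2 (applicable because $|\beta_k^{\text{PRP-FR}}| \leq \beta_k^{\text{FR}}$ by construction in (\ref{3:beta_prp_fr}) and $\sigma_2 < 1/2$), giving $g_k^{T} d_k \leq \frac{2\sigma_2 - 1}{1-\sigma_2}\|g_k\|^2$ with strictly negative coefficient, which converts the above into a descent-like inequality driven by $E[\|g_k\|^2 \mid \omega_k]$.

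The main work is then controlling $E[\|g_k\|^2 \mid \omega_k] = \|\nabla f(\omega_k)\|^2 + \operatorname{Var}(g_k)$. For the first term I would apply the right half of Lemma 1, obtaining $\|\nabla f(\omega_k)\|^2 \leq 2L(f(\omega_k) - f(\omega^*))$. For the variance, formula (\ref{2:var_gradient_gamma1}) tells us $\operatorname{Var}(g_k) \leq \operatorname{Var}(\bar X)$, and $L$-smoothness together with strong convexity lets me bound $\operatorname{Var}(\bar X)$ by an average of terms of the form $L(f(\omega_k) - f(\omega^*)) + L(f(\phi_i^k) - f(\omega^*))$. This injects the history iterates $\phi_i^k$ into the recursion, exactly as in the SAGA analysis. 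To close the loop I would unroll this dependence along the preceding $m$ steps, using Assumption 3 ($\beta_k \leq \beta$) so that the influence of stale samples decays geometrically; summing the resulting geometric series produces the factor $(1-\beta^m)/(1-\beta)$ that appears in $\xi$.

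Finally, assembling the descent inequality, the variance bound, and the left half of Lemma 1 ($2\mu(f(\omega_k)-f(\omega^*)) \leq \|\nabla f(\omega_k)\|^2$), I would read off the one-step contraction and use the step-size restriction $\alpha_1 < (1-\beta)/(2L\sigma_1)$ exactly to ensure the coefficient of $f(\omega_k) - f(\omega^*)$ on the right-hand side is strictly below one; a short algebraic simplification should identify this coefficient with $\xi$. Iterating $k$ times then yields (\ref{4:theorem_concludtion}), with the constant $C$ absorbing the initial historical terms from the $m$-step unroll. I expect the hardest part to be the variance-to-suboptimality conversion in the third paragraph: the SGMV estimate's adaptive scalar $\gamma^*$ complicates the usual SAGA-style bookkeeping, and controlling the cross terms tightly enough to retain the factor $(1-\beta)$ in the denominator of $\xi$ without spoiling the $\xi < 1$ guarantee is the delicate step.
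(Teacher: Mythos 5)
Your opening two steps are sound and in fact slightly cleaner than the paper's: the paper does not apply Lemma 2 directly to $g_k^T d_k$; instead it expands $d_k=-g_k+\beta_k d_{k-1}$, keeps the $-\|g_k\|^2$ term, and bounds the cross term $\beta_k g_k^T d_{k-1}$ via the second Wolfe condition together with Lemma 2 applied at index $k-1$. But from that point on your plan goes in the wrong direction. After the Wolfe-based descent step, $E\|g_k\|^2$ enters the inequality with a \emph{negative} coefficient, so what is needed is a \emph{lower} bound on it ($E\|g_k\|^2\ge\|E g_k\|^2=\|\nabla f(\omega_k)\|^2\ge 2\mu(f(\omega_k)-f(\omega^*))$, i.e.\ unbiasedness plus Jensen plus the left half of Lemma 1). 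Your proposed ``main work''---upper-bounding $E\|g_k\|^2$ by $\|\nabla f(\omega_k)\|^2+\operatorname{Var}(g_k)$ and then controlling the variance SAGA-style through the history points $\phi_i^k$---is an upper bound on a quantity whose coefficient is negative; it cannot tighten the descent inequality and the argument does not close. That machinery belongs to the SVRG/SAGA smoothness-based analysis ($f(\omega_{k+1})\le f(\omega_k)-\alpha\nabla f^Tg_k+\tfrac{L\alpha^2}{2}\|g_k\|^2$), where $\|g_k\|^2$ carries a positive coefficient; it is not compatible with the Wolfe sufficient-decrease route you (and the paper) start from. Notably, the paper's proof never uses the variance-reduction property of the SGMV estimate at all---only its unbiasedness.

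The second missing ingredient is where the additive geometric term actually comes from. In the paper it is the cross term $\sigma_1 E(\alpha_k\beta_k g_k^Td_{k-1})$, bounded by $\frac{\alpha_2\sigma_1\sigma_2\beta}{1-\sigma_2}E\|g_{k-1}\|^2$ using Assumption 2, Assumption 3 and Lemma 2; then Assumption 3 is unrolled as $E\|g_{k-1}\|^2\le\beta^{k-1}E\|g_0\|^2\le 2\beta^{k-1}L(f(\omega_0)-f(\omega^*))$ (the right half of Lemma 1 enters here, at $\omega_0$, not at $\omega_k$). This yields the recursion $\Delta_{k+1}\le\xi\Delta_k+\zeta\beta^k\Delta_0$ with $\xi=1-2\mu\sigma_1\alpha_1$, whose solution $\Delta_{k+1}\le\xi^{k+1}\Delta_0\bigl(1+\tfrac{\zeta}{\xi-\beta}\bigr)$ supplies both the constant $C$ and the requirement $\beta\le\xi<1$ that motivates the step-size cap. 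Your proposal never invokes Assumption 3, never identifies the cross term as the source of the $\beta^k$ decay, and attributes the geometric factor to a history-table unroll that the algorithm's analysis does not perform. The difficulty you flag at the end (the adaptive $\gamma^*$ spoiling the SAGA bookkeeping) is real, which is precisely why the paper's proof avoids any variance estimate and relies on unbiasedness and Assumption 3 instead.
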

\begin{proof}

 It follows from strong Wolfe conditions (\ref{3:wolfe_1}) that
\begin{equation}
 f_{S_k}(\omega_{k+1})-f_{S_k}(\omega_{k}) \leq \sigma_1 \alpha_k g_k^T d_k
\label{4:theorem_step1}
\end{equation} 
Taking expectation on both sides of (\ref{4:theorem_step1}), we get
\begin{equation}
 E(f(\omega_{k+1}))-f(\omega_{k}) \leq \sigma_1 E(\alpha_k g_k^T d_k)
\label{4:theorem_step2}
\end{equation}
Then the definition of $d_k$ is used in (\ref{4:theorem_step2}), we have
\begin{equation}
 E(f(\omega_{k+1}))-f(\omega_{k}) \leq - \sigma_1 E(\alpha_k \|g_k\|^2)+\sigma_1 E(\alpha_k \beta_k g_k^T d_{k-1})
 \label{4:theorem_step3}
\end{equation}
Next, we apply Assumption 2 and strong Wolfe conditions (20) to get
\begin{equation}
 E(f(\omega_{k+1}))-f(\omega_{k}) \leq - \sigma_1\alpha_1 E( \|g_k\|^2)+\sigma_1\sigma_2 E(\alpha_k \beta_k g_{k-1}^T d_{k-1})
 \label{4:theorem_step4}
\end{equation}
By Assumption 2, 3 and Lemma 2, we have that
\begin{equation}
 E(f(\omega_{k+1}))-f(\omega_{k}) \leq - \sigma_1\alpha_1 \|E( g_k)\|^2+\frac{\alpha_2 \sigma_1 \sigma_2 \beta }{1-\sigma_2} E(\| g_{k-1}\|^2)
 \label{4:theorem_step5}
\end{equation}
Note that $g_k$  is an unbiased estimate of the full gradient $\nabla f(\omega_k)$ ,i.e. 
\begin{equation}
E(g_k)=\nabla f(\omega_k)
\label{4:theorem_step6}
\end{equation}
Then by (\ref{4:theorem_step6}) and Lemma1, we have that
\begin{equation}
\|E(g_k)\|^2=\|\nabla f(\omega_k)\|^2 \ge 2 \mu (f(\omega_k)-f(\omega^*))
\label{4:theorem_step7}
\end{equation}
On the other hand, Assumption 3 imples that 
\begin{equation}
E(\|g_k\|^2) \leq \beta E(\|g_{k-1}\|^2)
\label{4:theorem_step8}
\end{equation}
Then we unfold $g_{k-1}$ in (\ref{4:theorem_step8}) until reaching $g_0$ and use (\ref{4:theorem_step6}), Lemma 1, i.e.
\begin{equation}
\begin{aligned}
E(\|g_{k-1}\|^2) &\leq \beta^{k-1}E(\|g_0\|^2)\\
&\leq \beta^{k-1}\|E(g_0)\|^2\\
&\leq \beta^{k-1}\|\nabla f(\omega_0)\|^2\\
&\leq 2 \beta^{k-1}L(f(\omega_0)-f(\omega^*))
\label{4:theorem_step9}
\end{aligned}
\end{equation}
Now using (\ref{4:theorem_step7}) and (\ref{4:theorem_step9}) in  (\ref{4:theorem_step5}) we obtain that
\begin{equation}
 E(f(\omega_{k+1}))-f(\omega_{k}) \leq -2\mu\sigma_1\alpha_1(f(\omega_k)-f(\omega^*))+\frac {2L\alpha_2 \sigma_1 \sigma_2 \beta^k}{1-\sigma_2}(f(\omega_0)-f(\omega^*))
\label{4:theorem_step10} 
\end{equation}
Taking expectation on the both sides of (\ref{4:theorem_step10}) and rearranging, we obtain that
\begin{equation}
 E(f(\omega_{k+1}))-f(\omega^*) \leq (1-2\mu\sigma_1\alpha_1)(E(f(\omega_k))-f(\omega^*))+\frac {2L\alpha_2 \sigma_1 \sigma_2 \beta^k}{1-\sigma_2}(E(f(\omega_0))-f(\omega^*))
 \label{4:theorem_step11}
\end{equation}
For the convenience of discussion, we define
\begin{equation}
\Delta_{k+1}= E(f(\omega_{k+1}))-f(\omega^*), \xi=1-2\mu\sigma_1\alpha_1, \zeta=\frac {2L\alpha_2 \sigma_1 \sigma_2 }{1-\sigma_2}
\label{4:theorem_step12}
\end{equation} 
We rewrite (\ref{4:theorem_step11}) as (\ref{4:theorem_step13}),
\begin{equation}
\Delta_{k+1}=\xi \Delta_k+\zeta \Delta_0 \beta^k
\label{4:theorem_step13}
\end{equation} 
Then we unfold $\Delta_k$ in (\ref{4:theorem_step13}) until reaching $\Delta_0$,i.e.  
\begin{equation}
\begin{aligned}
\Delta_{k+1}&=\xi^{k+1} \Delta_0 + \Delta_0 \zeta \sum_{i=0}^{k} \beta^i \xi^{k-i}\\
&\leq \xi^{k+1}\Delta_0(1+\zeta \frac{1-(\frac{\beta}{\xi})^{k+1}}{\xi-\beta})\\
&\leq \xi^{k+1}\Delta_0(1+\frac{\zeta}{\xi-\beta})
\label{4:theorem_step14}
\end{aligned}
\end{equation} 
In the end, according to (\ref{4:theorem_step14}) and the definition of $\Delta_k$ in (\ref{4:theorem_step12}) , we have
\begin{equation}
E(f(\omega_{k+1}))-f(\omega_*) \leq (1+\frac{\zeta}{\xi-\beta})\xi^{k+1}(E(f(\omega_0))-f(\omega^*))
\label{4:theorem_step15}
\end{equation} 
let $0 \leq \alpha_1 \leq \frac{1-\beta}{2\mu\sigma_1}$ ,then it follows that  $\beta \leq \xi \leq 1$.
Hence, the Algorithm1 has the linear convergence rate.
\end{proof}

\begin{theorem}
 Let Assumption 1,2,3 hold. If number of outer loop iterations in Algorithm2 satisfies: $$m>\frac{(1-\sigma_1)+2L\alpha_2\sigma_1\sigma_2\beta}{2\mu\sigma_1\alpha_1(1-\sigma_2)}$$
We have:  $\forall l>0$

\begin{equation}
E(f(x_l))-f({\omega^*}) \leq \xi^l (E(f(x_0))-f(\omega^*))
\label{4:theorem2_conludtion}
\end{equation} 
where $$\xi = \frac{(1-\sigma_1)(1-\beta)+2L\alpha\sigma_1\sigma_2(1-\beta^m)}{2\mu\sigma_1m(1-\sigma_2)(1-\beta)}<1$$ and $ \omega^*$   is the unique minimizer of $f$
\end{theorem}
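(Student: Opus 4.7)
The plan is to adapt the argument of Theorem 1 to the nested-loop structure of Algorithm 2: I will apply the per-step descent inequality inside the inner loop and then sum (rather than only unfold) over all $m$ inner iterations before linking consecutive outer iterates $x_{l-1}$ and $x_l$. Fix an outer iteration index $l$, so that $\omega_0 = x_{l-1}$ and $g_0 = h_{l-1}$, and work throughout inside the inner loop indexed by $k = 0,1,\dots,m-1$.

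First I would replicate the chain (\ref{4:theorem_step1})--(\ref{4:theorem_step5}) at the inner-loop level. Starting from the strong Wolfe condition (\ref{3:wolfe_1}), I take expectations, substitute $d_k = -g_k + \beta_k d_{k-1}$, apply (\ref{3:wolfe_2}) to the cross term $E(\alpha_k\beta_k g_k^T d_{k-1})$, and use Assumptions 2, 3 together with Lemma 2 to arrive at
\begin{equation*}
E(f(\omega_{k+1})) - f(\omega_k) \leq -\sigma_1\alpha_1 \|E(g_k)\|^2 + \frac{\alpha_2\sigma_1\sigma_2\beta}{1-\sigma_2}\, E(\|g_{k-1}\|^2).
\end{equation*}
Unbiasedness (\ref{2:unbiased}) and Lemma 1 give $\|E(g_k)\|^2 \geq 2\mu(f(\omega_k) - f(\omega^*))$, while iterating Assumption 3 down to $g_0 = h_{l-1}$ and once more invoking Lemma 1 yields $E(\|g_{k-1}\|^2) \leq 2L\beta^{k-1}(f(x_{l-1}) - f(\omega^*))$; the identity $E(g_0) = \nabla f(x_{l-1})$ follows by conditioning on the outer-loop history and using unbiasedness of the new stochastic gradient estimate.

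Next I would sum these per-step inequalities for $k = 0,\dots,m-1$. The left side telescopes to $E(f(\omega_m)) - f(x_{l-1})$, the first right-side term contributes $-2\mu\sigma_1\alpha_1\sum_{k=0}^{m-1}(E(f(\omega_k)) - f(\omega^*))$, and the geometric sum $\sum_{k=0}^{m-1}\beta^{k-1}$ produces the factor $(1-\beta^m)/(1-\beta)$ that appears in the stated $\xi$. Replacing $\frac{1}{m}\sum_{k} E(f(\omega_k))$ by $E(f(x_l))$ through Option II (uniform random selection of $x_l$) and rearranging yields the outer-loop contraction
\begin{equation*}
E(f(x_l)) - f(\omega^*) \leq \xi \bigl( E(f(x_{l-1})) - f(\omega^*) \bigr),
\end{equation*}
with $\xi$ matching the stated expression; iterating over $l = 1,\dots,T$ then delivers (\ref{4:theorem2_conludtion}).

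The main obstacle I expect is the bookkeeping of the summation step and the verification of $\xi < 1$: one must handle the $\beta^{k-1}$ estimate carefully at the $k=0$ boundary, control the off-by-one between the summation range and Option II's index set $\{1,\dots,m\}$, and then solve the resulting implicit inequality for $m$, which is exactly what forces the hypothesis $m > \frac{(1-\sigma_1) + 2L\alpha_2\sigma_1\sigma_2\beta}{2\mu\sigma_1\alpha_1(1-\sigma_2)}$. Any looseness in the geometric-series estimate would inflate this lower bound, so the summation must be tight. A secondary technical point is the dependence of $g_0 = h_{l-1}$ on the previous outer iteration, which is dispatched by conditional expectation together with the tower property.
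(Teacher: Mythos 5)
Your proposal follows essentially the same route as the paper: establish the per-step descent inequality exactly as in Theorem 1, sum it over the $m$ inner iterations so the left side telescopes, use Option II to identify $\frac{1}{m}\sum_k E(f(\omega_k))$ with $E(f(x_l))$, bound the geometric series by $(1-\beta^m)/(1-\beta)$, and rearrange into the contraction $E(f(x_l))-f(\omega^*)\leq\xi\,(E(f(x_{l-1}))-f(\omega^*))$ with the condition on $m$ forcing $\xi<1$. The only differences are cosmetic (an index shift between $x_{l-1}\to x_l$ versus the paper's $x_l\to x_{l+1}$, and your explicit mention of the tower property for $E(g_0)=\nabla f(x_{l-1})$, which the paper leaves implicit).
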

\begin{proof}
The first half of the proof is the same as that of Theorem 1, so we'll skip this part. \\
Taking expectation on the both sides of (\ref{4:theorem_step10}), summing over $ k=0,1...m-1$, we know
\begin{equation}
\begin{aligned}
E(f(\omega_m)-f(\omega_0)) \leq& -2\mu\sigma_1\alpha_1\sum_{i=1}^{m}E(f(\omega_i)-f(\omega^*))\\
&+\frac {2L\alpha_2 \sigma_1 \sigma_2 }{1-\sigma_2}E(f(\omega_0)-f(\omega^*))\sum_{i=1}^{m}\beta^i\\
=&-2\mu\sigma_1\alpha_1 m E(f(x_{l+1})-f(\omega^*))\\
&+\frac {2L\alpha_2 \sigma_1 \sigma_2\beta }{1-\sigma_2}E(f(\omega_0)-f(\omega^*))\frac{1-\beta^m}{1-\beta}\\
\label{4:theorem2_step1}
\end{aligned}
\end{equation}
Rearranging (\ref{4:theorem2_step1}) and using $f(\omega^*)\leq E(f(\omega_m))$ and $f(x_l)=f(\omega_0)$\, we obtains\\
\begin{equation}
\begin{aligned}
0 \leq &E(f(\omega_0)-f(\omega_m))-2\mu\sigma_1\alpha_1 m E(f(x_{l+1})-f(\omega^*))\\
&+\frac {2L\alpha_2 \sigma_1 \sigma_2\beta(1-\beta^m) }{(1-\sigma_2)(1-\beta)}E(f(\omega_0)-f(\omega^*))\\
\leq &E(f(x_l)-f(\omega^*))-2\mu\sigma_1\alpha_1 m E(f(x_{l+1})-f(\omega^*))\\
&+\frac {2L\alpha_2 \sigma_1 \sigma_2\beta(1-\beta^m) }{(1-\sigma_2)(1-\beta)}E(f(x_l)-f(\omega^*))
\label{4:theorem2_step2}
\end{aligned}
\end{equation}
Then, we have that 
\begin{equation}
\begin{aligned}
E(f(x_{l+1})-f(\omega_*)) \leq \xi E(f(x_l)-f(\omega^*))\\
\text{where } \xi = \frac{(1-\sigma_2)(1-\beta)+2L\alpha_2\sigma_1\sigma_2\beta(1-\beta^m)}{2\mu\sigma_1\alpha_1 m(1-\sigma_2)(1-\beta)}\\
\label{4:theorem2_step3}
\end{aligned}
\end{equation}
Let $\xi\leq 1$; it follows that
\begin{equation}
m>\frac{(1-\sigma_1)(1-\beta)+2L\alpha_2\sigma_1\sigma_2\beta(1-\beta^m)}{2\mu\sigma_1\alpha_1(1-\sigma_2)(1-\beta)}>\frac{(1-\sigma_1)+2L\alpha_2\sigma_1\sigma_2\beta}{2\mu\sigma_1\alpha_1(1-\sigma_2)}
\label{4:theorem2_step4}
\end{equation}

\end{proof}
Hence, when $m$ is large enough, the Algorithm2 has the linear convergence rate.
\section{Numerical Experiments}
\label{sec:4}
In this section, we use the twelve data sets and the ridge regression model to reveal promising performance of the proposed stochastic gradient estimate and two improved algorithms. The summary of data sets is shown in table 1. Protein, Quantum can be found in the KDD Cup 2004 website\footnote{http://osmot.cs.cornell.edu/kddcup}, and other datasets are available in LIBSVM\footnote{https://www.csie.ntu.edu.tw/~cjlin/libsvmtools/datasets/}. In A9a and W8a data sets, all feature vectors are 0-1 variables, so we do not normalize them. All feature vectors of the remaining data sets are scale into the range of [-1,1] by the max-min scaler.
\begin{center}
\begin{table}
\centering
\caption{Summary of data sets used in numerical experiments}
\label{tab:1}       
\begin{tabular}{l c c l}
\hline\noalign{\smallskip}
dataset & d & n & type \\
\noalign{\smallskip}\hline\noalign{\smallskip}
A9a & 123 & 32561&binary classification  \\
Ijcnn1 & 22 & 49990 &binary classification  \\
Protein & 74 & 145751&binary classification  \\
Quantum & 78 & 50000&binary classification  \\
W8a & 300 & 49749&binary classification  \\
Covtype & 54 & 581012 &binary classification  \\
YearPredictionMSD & 90 & 463715 &regression \\
Pyrim & 27 & 74 &regression \\
Bodyfat & 24 & 252&regression  \\
Triazines & 60 & 180&regression  \\
Eunite2001 & 16 & 336&regression  \\
Cpusmall & 12 & 8192&regression  \\
\noalign{\smallskip}\hline
\end{tabular}
\end{table}
\end{center}
The ridge regression model are presented as follows:
\begin{equation}
\begin{aligned}
\min\limits_{\omega} \frac{1}{n} \sum_{i=1}^{n} (y_i -x_i \omega)^2 +\lambda \|\omega\|^2
\label{5:ridge}
\end{aligned}
\end{equation} 
where $x_i \in R^d$ is denoted the feature vector of the i-th  data sample, $y_i \in R$  is denoted the actual value of the i-th data sample, and $\lambda$  is the regularization parameter.

\subsection{Variance Comparison of Stochastic Gradient Estimate }
\label{sec:2}
In this subsection, we designed the experiments to demonstrate the efficiency of the new stochastic gradient estimate. Here are the steps. Firstly, we use the conjugate gradient method to find the minimum of the ridge regression model on the $Ijcnn1$ dataset. The initial point $\omega_0$ and first 100 iteration points $\omega_k,k=1,2...,100$ are stored. Secondly, randomly sample 100 mini-batch samples on $Ijcnn1$, denoted by $S_l,l=1,2...100$. Thirdly, the full gradient at $w_{101}$ is estimated approximately by (\ref{5:experiment_gradient}) at $\gamma=1$ and $\gamma=\gamma^*$, respectively. Finally, variance comparison of $g_{S_l}^k(\gamma=\gamma^*)$ and $g_{S_l}^k(\gamma=1)$ are shown in Figure 1 for each $k$. Variance of $g_{S_l}^k$ is denoted by (\ref{5:experiment_variance}).\\
\begin{figure}
\begin{center}
\includegraphics[width=0.65\textwidth]{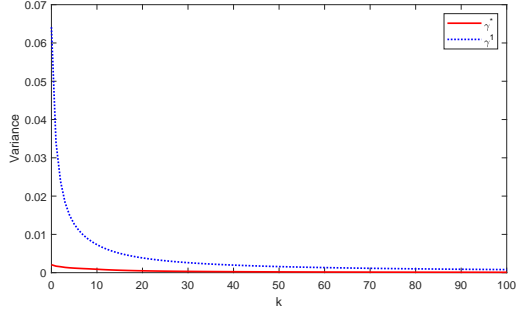}
\end{center}
\caption{Variance comparison of stochastic gradient estimate: $g_{S_l}^k(\gamma=\gamma^*)$ and $g_{S_l}^k(\gamma=1)$(x-axis is $k$, y-axis is variance of $g_{S_l}^k$)}
\label{fig:1}       
\end{figure}

As observed in Figure 1, the variances of $g_{S_l}^k(\gamma=\gamma^*)$ and $g_{S_l}^k(\gamma=1)$  decrease as $k$ increases. In addition, the variance of $g_{S_l}^k(\gamma=\gamma^*)$ is smaller than $g_{S_l}^k(\gamma=1)$. This result shows that the variance reduction effect of the new stochastic gradient estimate $(\gamma=\gamma^*)$ is better than stochastic gradient estimate $(\gamma=1).$

\begin{equation}
\begin{aligned}
g_{S_l}^k(\gamma) &= \nabla f_{S_l}(w_{101})-\gamma(\nabla f_{S_l}(w_{k})-\frac{1}{n}\sum_{i=1}^n \nabla f_i(w_k)) \\
k&=0,1,2...100,l=1,2...100
\label{5:experiment_gradient}
\end{aligned}
\end{equation}
\begin{equation}
\begin{aligned}
Var(g_{S_l}^k(\gamma))=\frac{1}{100}\sum_{l=1}^{100}(g_{S_l}^k(\gamma)-\frac{1}{100}\sum_{l=1}^{100}g_{S_l}^k(\gamma))^2, k=0,1,2...100
\label{5:experiment_variance}
\end{aligned}
\end{equation}
\subsection{Experimental Result of Algorithm1 and Algorithm2}
\label{sec:2}
Figure 2 and Figure 3 plot the performance profile of SCGA, Algorithm1, CGVR and Algorithm2 on data sets of binary classification and regression. Two figures show that the Algorithm1 and Algorithm2 can converge faster than SCGA and CGVR. \\

\begin{figure}
\begin{center}
\includegraphics[width=0.95\textwidth]{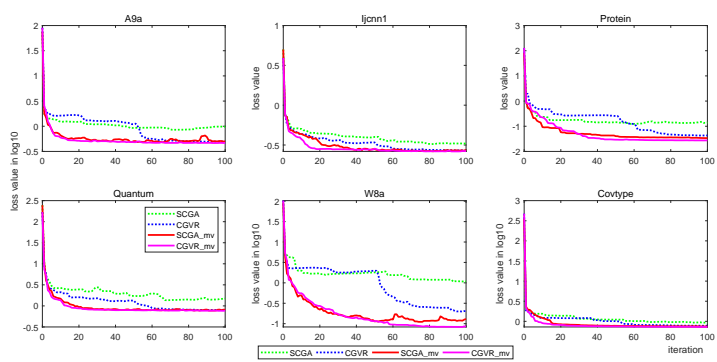}
\end{center}
\caption{performance profiles of SCGA,CGVR,Algorithm1,Algorithm2 on the six data sets of binary classification (x-axis is times of iteration, y-axis is loss value in terms of log10)}
\label{fig:2}       
\end{figure}

\begin{figure}
\begin{center}
\includegraphics[width=0.95\textwidth]{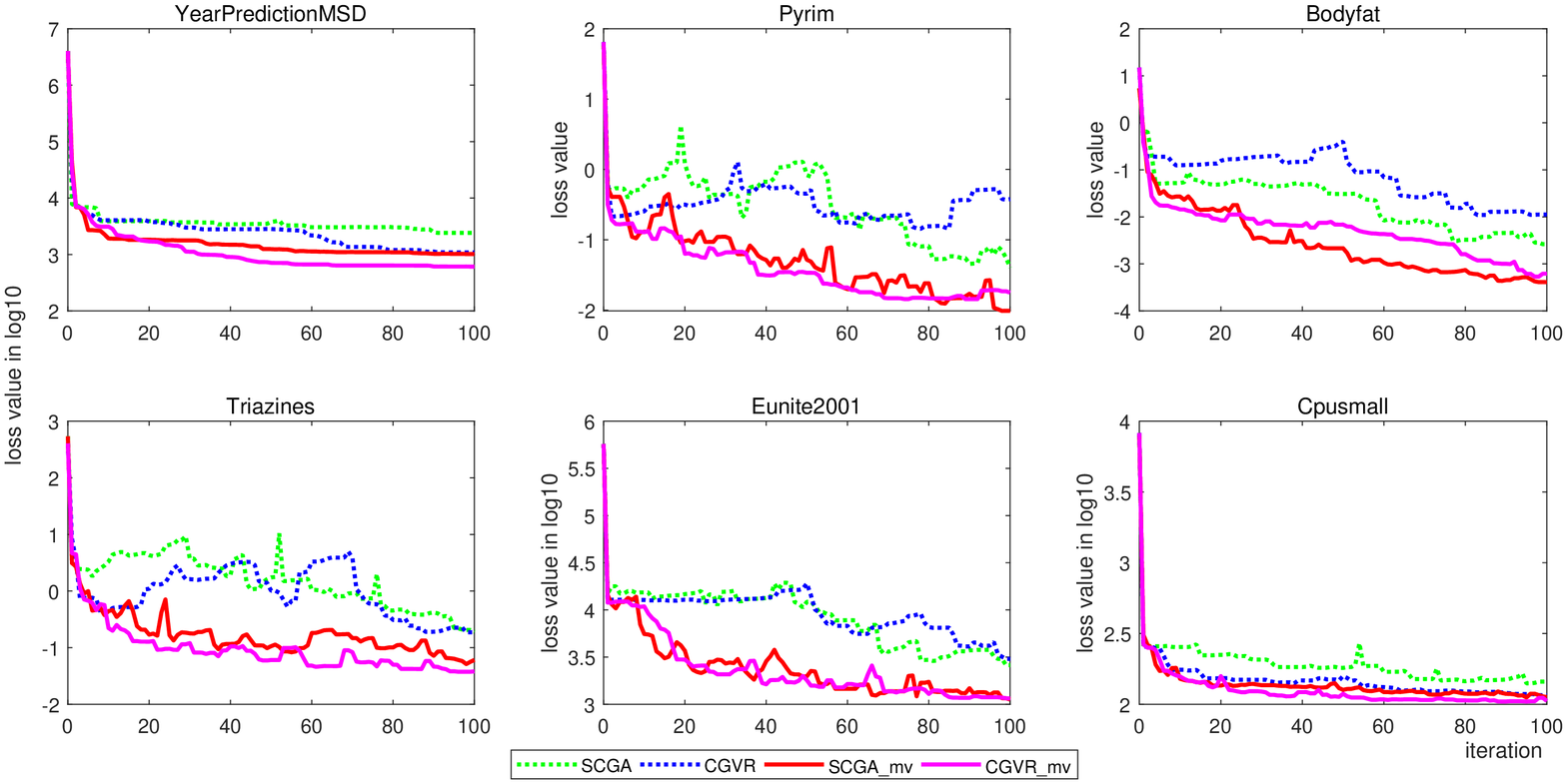}
\end{center}
\caption{performance profiles of SCGA,CGVR,Algorithm1,Algorithm2 on the six data sets of regression (x-axis is times of iteration, y-axis is loss value in terms of log10)}
\label{fig:3}       
\end{figure}
Finally, Table 2 shows runtimes of the above experiments through 100 iterations. Note that runtime of Algorithm1 and Algorithm2 have no significant difference in runtime, compare to SCGA and CGVR. \\
\begin{center}
\begin{table}
\centering
\caption{Runtime in 100 iterations}
\label{tab:2}       
\begin{tabular}{l l l l l}
\hline\noalign{\smallskip}
Dataset & SCGA & Algorithm1 & CGVR & Algorithm2 \\
\noalign{\smallskip}\hline\noalign{\smallskip}
A9a & 4.53 & 4.60 & 3.92 & 4.26 \\
Ijcnn1 & 5.86 & 5.75 & 5.13 & 5.09 \\
Protein & 16.12 & 15.89 & 14.54 & 14.59 \\
Quantum & 6.06 & 6.01 & 5.42 & 5.56 \\
W8a & 7.94 & 8.3 & 6.12 & 6.54 0\\
Covtype & 61.53 & 62.47 & 54.48 & 55.50 \\
YearPredictionMSD & 49.85 & 50.71& 44.90 & 46.34 \\
Pyrim & 0.10 & 0.14 & 0.12 & 0.14 \\
Bodyfat & 0.15 & 0.1 & 0.15 & 0.15 5\\
Triazines & 0.14 & 0.21 & 0.15 & 0.23 \\
Eunite2001  & 0.16 & 0.18 & 0.18 & 0.22\\
Cpusmall & 1.20 & 1.19 & 1.02 & 1.05 \\
\textbf{Total} & \textbf{136.13} & \textbf{139.67}& \textbf{153.65} & \textbf{155.60}\\
\noalign{\smallskip}\hline
\end{tabular}
\end{table}
\end{center}
\section{Conclusion}
\label{sec:5}
In this paper, we propose a new variance reduction stochastic gradient estimate. It is a more desirable estimate than estimate in SCGA and CGVR for its unbiasedness and minimal variance. Then we apply it to SCGA and CGVR, and propose two improved algorithms: Algorithm1 and Algorithm2. Next, the linear convergence rate of the new algorithms is proved under strong convexity and smoothness. Finally, we compare the convergence rate of the SCGA, Algorithm1, CGVR, Algorithm2 in numerical experiments. The results show that Algorithm1 and Algorithm2 have significant advantages in convergence rate than SCGA and CGVR. Besides, their runtime is not obvious difference.

%
%


\begin{thebibliography}{}
%
%

\bibitem{bottou2010large}
L.~Bottou, ``Large-scale machine learning with stochastic gradient descent,''
  in \emph{Proceedings of COMPSTAT'2010}.\hskip 1em plus 0.5em minus
  0.4em\relax Springer, 2010, pp. 177--186.

\bibitem{dozat2016incorporating}
T.~Dozat, ``Incorporating nesterov momentum into adam,'' \emph{Proc. ICLR
  Workshop}, pp. 1--4, 2016.

\bibitem{reddi2019convergence}
S.~J. Reddi, S.~Kale, and S.~Kumar, ``On the convergence of adam and beyond,''
  \emph{Proc. ICLR}, pp. 1--23, 2018.

\bibitem{schmidt2017minimizing}
M.~Schmidt, N.~Le~Roux, and F.~Bach, ``Minimizing finite sums with the
  stochastic average gradient,'' \emph{Mathematical Programming}, vol. 162,
  no.~1, pp. 83--112, 2017.

\bibitem{defazio2014saga}
A.~Defazio, F.~Bach, and S.~Lacoste-Julien, ``Saga: A fast incremental gradient
  method with support for non-strongly convex composite objectives,''
  \emph{Advances in neural information processing systems}, vol.~27, pp.
  1646--1654, 2014.

\bibitem{johnson2013accelerating}
R.~Johnson and T.~Zhang, ``Accelerating stochastic gradient descent using
  predictive variance reduction,'' \emph{Advances in neural information
  processing systems}, vol.~26, 2013.

\bibitem{kou2022mini}
C.~Kou and H.~Yang, ``A mini-batch stochastic conjugate gradient algorithm with
  variance reduction,'' \emph{Journal of Global Optimization}, pp. 1--17, 2022.

\bibitem{jin2018stochastic}
X.-B. Jin, X.-Y. Zhang, K.~Huang, and G.-G. Geng, ``Stochastic conjugate
  gradient algorithm with variance reduction,'' \emph{IEEE Transactions on
  Neural Networks and Learning Systems}, vol.~30, no.~5, pp. 1360--1369, 2018.

\bibitem{fletcher1964function}
R.~Fletcher and C.~M. Reeves, ``Function minimization by conjugate gradients,''
  \emph{The computer journal}, vol.~7, no.~2, pp. 149--154, 1964.

\bibitem{polak1969note}
E.~Polak and G.~Ribiere, ``Note sur la convergence de m{\'e}thodes de
  directions conjugu{\'e}es,'' \emph{Revue fran{\c{c}}aise d'informatique et de
  recherche op{\'e}rationnelle. S{\'e}rie rouge}, vol.~3, no.~16, pp. 35--43,
  1969.

\bibitem{polyak1969conjugate}
B.~T. Polyak, ``The conjugate gradient method in extremal problems,''
  \emph{USSR Computational Mathematics and Mathematical Physics}, vol.~9,
  no.~4, pp. 94--112, 1969.

\bibitem{hestenes1952methods}
M.~R. Hestenes and E.~Stiefel, ``Methods of conjugate gradients for solving,''
  \emph{Journal of research of the National Bureau of Standards}, vol.~49,
  no.~6, p. 409, 1952.

\bibitem{liu1991efficient}
Y.~Liu and C.~Storey, ``Efficient generalized conjugate gradient algorithms,
  part 1: theory,'' \emph{Journal of optimization theory and applications},
  vol.~69, no.~1, pp. 129--137, 1991.

\bibitem{dai1999nonlinear}
Y.-H. Dai and Y.~Yuan, ``A nonlinear conjugate gradient method with a strong
  global convergence property,'' \emph{SIAM Journal on optimization}, vol.~10,
  no.~1, pp. 177--182, 1999.

\bibitem{touati1990efficient}
D.~Touati-Ahmed and C.~Storey, ``Efficient hybrid conjugate gradient
  techniques,'' \emph{Journal of optimization theory and applications},
  vol.~64, no.~2, pp. 379--397, 1990.

\bibitem{hu1991global}
Y.~Hu and C.~Storey, ``Global convergence result for conjugate gradient
  methods,'' \emph{Journal of Optimization Theory and Applications}, vol.~71,
  no.~2, pp. 399--405, 1991.

\bibitem{gilbert1992global}
J.~C. Gilbert and J.~Nocedal, ``Global convergence properties of conjugate
  gradient methods for optimization,'' \emph{SIAM Journal on optimization},
  vol.~2, no.~1, pp. 21--42, 1992.

\bibitem{dai2013nonlinear}
Y.-H. Dai and C.-X. Kou, ``A nonlinear conjugate gradient algorithm with an
  optimal property and an improved wolfe line search,'' \emph{SIAM Journal on
  Optimization}, vol.~23, no.~1, pp. 296--320, 2013.

\bibitem{hager2005new}
W.~W. Hager and H.~Zhang, ``A new conjugate gradient method with guaranteed
  descent and an efficient line search,'' \emph{SIAM Journal on optimization},
  vol.~16, no.~1, pp. 170--192, 2005.

\bibitem{hager2013limited}
W.~W. Hager and H.~Zhang, ``The limited memory conjugate gradient method,''
  \emph{SIAM Journal on Optimization}, vol.~23, no.~4, pp. 2150--2168, 2013.

\bibitem{andrei2020nonlinear}
N.~Andrei \emph{et~al.}, \emph{Nonlinear conjugate gradient methods for
  unconstrained optimization}.\hskip 1em plus 0.5em minus 0.4em\relax Springer,
  2020.

\bibitem{Dai2020nonlinear}
Y.~H. Dai, ``Nonlinear conjugate gradient methods,'' \emph{American Cancer
  Society}.

\bibitem{lidebrandt2007variance}
T.~Lidebrandt, \emph{Variance reduction three approaches to control
  variates}.\hskip 1em plus 0.5em minus 0.4em\relax Matematisk statistik,
  Stockholms universitet, 2007.
\end{thebibliography}


\end{document}